\newtheorem{theorem}{Theorem}
\newtheorem{lemma}{Lemma}
\newtheorem{proposition}{Proposition}
\newtheorem{corollary}{Corollary}
\newtheorem{definition}{Definition}
\newtheorem{remark}{Remark}
\newcommand{\innerthmname}{}
\newenvironment{statement}[1]
 {\renewcommand{\innerthmname}{#1}\innerthm}
 {\endinnerthm}
\def\BState{\State\hskip-\ALG@thistlm}
\newcommand{\dist}{\mathrm{dist}}
\newcommand{\A}{\mathcal{A}}
\newcommand{\one}{1}
\newcommand{\R}{\mathbb{R}}
\newcommand{\C}{\mathcal{C}}
\newcommand{\N}{\mathbb{N}}
\begin{document}

\title{\bf Converging approximations of attractors via almost Lyapunov functions and semidefinite programming}

\vspace{-15mm}
\date{}
\maketitle

\begin{center}
\author{Corbinian Schlosser$^{1}$}
\end{center}

\footnotetext[1]{CNRS; LAAS; 7 avenue du colonel Roche, F-31400 Toulouse; France. {\tt cschlosser@laas.fr}}
\begin{abstract}
In this paper, we combine two existing approaches for approximating attractors. One of them approximates the attractors arbitrarily well by sublevel sets related to solutions of infinite dimensional linear programming problems. A downside there is that these sets are not necessarily positively invariant. On the contrary, the second method provides supersets of the attractor which are positively invariant. Their method on the other hand has the disadvantage that the underlying optimization problem is not computationally tractable without the use of heuristics -- and incorporating them comes at the price of losing guaranteed convergence. We marry both approaches by combining their techniques and we get converging outer approximations of the attractor consisting of positively invariant sets based on convex optimization via sum-of-squares techniques. The method is easy to use and illustrated by numerical examples.
\end{abstract}

\section{Introduction}

Attractors and Lyapunov functions are intimately connected \cite{teel2000smooth}. Hence it is only natural that Lyapunov functions play a central role in control and asymptotic analysis of dynamical systems. Computing Lyapunov functions is therefore an intriguing but complicated task and often restricted to the case of finding a Lyapunov function for an a-priori given set. That finds extensive application in verification of asymptotic stability of an equilibrium point $x^*$ which lies at the base of many control problems. There are several techniques available to search for Lyapunov functions for a given set $A$ (such as $A = \{x^*\}$), see \cite{giesl2015review}. Among these are sum-of-squares (SOS) techniques that recently gained more and more popularity, beginning with the work of Parillo \cite{parrilo2000structured}, and have further developed since \cite{anderson2015advances}.

In the case of exponentially asymptotically stable fixed points searching for polynomial Lyapunov functions is sufficient \cite{anderson2015advances}. But already dropping the assumption of exponentially fast decay changes this situation, i.e. there exist polynomial dynamical systems with globally asymptotically stable equilibrium point for which no polynomial Lyapunov function exists \cite{ahmadi2011globally}. That shows that the general use of SOS polynomial Lyapunov functions is limited and extensions of the SOS methods are needed in order to treat more general systems. This gets even more drastic when we search for the attractor of the system, that is we do not a-priori know for which set $A$ we want to check if it is an attractor.

Except for Lyapunov based techniques, several other methods for computing/approximating the GA have been proposed. Some approximate the attractor by following trajectories for long but finite time $T \in [0,\infty)$ or using set oriented methods as in \cite{dellnitz2001algorithms}. Another approach motivated by a relaxation of the notion of Lyapunov functions was given in \cite{AlmostLyapunov}. There the authors showed that the weaker concept of Lyapunov function they use is still sufficient to envelope the attractor by positively invariant sets while at the same time, SOS polynomials can be used. The objective function in the underlying optimization problem in \cite{AlmostLyapunov} is hard to evaluate and to make their method computationally tractable the authors relaxed their problem at the cost of guaranteed convergence. Related to this approach is \cite{MilanCorbiAttractor} where the GA is characterized by an infinite dimensional linear programming problem (LP), also solved via a hierarchy of finite dimensional SDPs. Due to the linearity of the cost function in the underlying LP in \cite{MilanCorbiAttractor} the computations are simple and the authors showed convergence towards the attractor but the obtained sets lack the good property of being positively invariant. In this paper we merge both methods into one, maintaining both their advantages -- the approximating sets are positively invariant, guaranteed to convergence, and easy to compute.
\section{Notations}
We denote by $\N$ the natural numbers. The non-negative real numbers are denoted by $\R_+$. We denote the euclidean inner product of $a,b\in \R^n$ by $a \cdot b$. The function $\dist(\cdot,K)$ denotes the Euclidean distance function to a set $K \subset \mathbb{R}^n$ and $\dist(K_1,K_2)$ denotes the Hausdorff distance of two subsets of $\R^n$. For $K \subset \R^n$ we denote by $\overline{K}$ its closure and by $\mathring{K}$ its interior. The space of polynomials (in $n$ variables) is denoted by $\R[x_1,\ldots,x_n]$ or shorter $\R[x]$. The degree of $p \in \R[x]$ is denoted by $\deg(p)$ and the set of polynomials of degree at most $d \in \N$ by $\R[x]_d$. The space of continuous functions on $X$ is denoted by $\C(X)$ and the space of continuously differentiable functions on $\R^n$ by $\C^1(\R^n)$. We denote the gradient of $g \in \C^1(\R^n)$ by $\nabla g$. The pre-image of a set $K$ under a map $g$ is denoted by $g^{-1}(K)$. The Lebesgue measure is denoted by $\lambda$. For $K,K_1,K_2,\ldots \subset \R^n$ we say that $K_m$ converges to $K$ as $m \rightarrow \infty$ with respect to Lebesgue measure discrepancy if $\lim\limits_{m \rightarrow \infty} \lambda(K\setminus K_m) + \lambda (K_m \setminus K) = 0$.

\section{Setting and preliminary definitions}
We consider ordinary differential equations
\begin{equation}\label{EqnODE}
	\dot{x} = f(x), \;\; x(0) = x_0 \in \R^n.
\end{equation}
for a Lipschitz continuous vector field $f:\R^n \rightarrow \R^n$. By $\varphi_{t}(x_0)$ we denote the solution of (\ref{EqnODE}) at time $t\in \R_+$. Further, we consider a compact constraint set $X \subset \R^n$, that is, we are interested only in solutions to (\ref{EqnODE}) for which we have $\varphi_t(x) \in X$ for all $t \in \R_+$. Next, we define (the maximum) positively invariant sets, the basin of attraction, and the two notions of attractors from \cite{AlmostLyapunov}, \cite{MilanCorbiAttractor}, i.e. minimal respectively global attractors. These sets play central roles in this paper.

\begin{definition}\label{def:PosInvMaxInvBasinAttractorLyapunovFunction}
\begin{enumerate}
    \item A set $S$ is called positively invariant for (\ref{EqnODE}) if for all $x \in S$ and $t \in \R_+$ the solution $\varphi_t(x)$ is located in $S$.
    \item The maximum positively invariant set $M_+$ for $X$ is the set
    \begin{equation}\label{def:M+}
	    M_+ = \{x_0 \in X: \varphi_t(x_0) \in X\text{ for all } t \in \R_+\}.
	\end{equation}
	\item A compact set $\A \; \subset M_+ \subset X$ is called a global attractor (GA) for $X$ \cite{Robinson} if it is minimal uniformly attracting, i.e., it is the smallest compact set $\A \subset X$ such that
	\begin{equation}\label{def:Attractor}
	\lim_{t\to\infty} \dist(\varphi_t(M_+),\A) = 0.
	\end{equation}
	\item The basin of attraction of a set $A \subset \R^n$ is the set
	\begin{equation}\label{def:BasinOfAttraction}
        B_f(A):= \{x \in \R^n: \lim\limits_{t\rightarrow\infty}\dist(\varphi_t(x),\A) = 0 \}.
	\end{equation}
	\item A set $A \subset \R^n$ is called asymptotically stable if $B_f(A)$ is an open neighbourhood of $A$ and $A$ is stable, i.e. for all $\varepsilon > 0$ there exists a $\delta > 0$ such that for all $y \in \R^n$ with $\dist(y,A) < \delta$ we have $\dist(\varphi_t(y),A) < \varepsilon$ for all $t \in \R_+$.
	\item A non-empty compact set $A \subset X$ is called stable compact attractor for $X$ if it is positively invariant, asymptotically stable and $B_f(A) \supset X$. We call it minimal attractor (MA) if it is a minimal stable compact attractor for $X$, i.e. no strict subset of $A$ is also a stable compact attractor for $X$.
\end{enumerate}
\end{definition}




\begin{remark}\label{rem:M+InBf} By definition we have $M_+ \subset B_f(\A)$. Note that the GA depends highly on the constraint set $X$ and can be a repelling fixed point, or even the empty set, in cases where $M_+$ is small \cite{MilanCorbiAttractor}.
\end{remark}

Because $X$, and hence also $M_+$ is compact, the GA exists, is unique, positively invariant \cite{Robinson}, and will be denoted by $\A$. Similarly, the MA  is unique as well and will be denoted by $\mathbf{A}$.

\begin{remark}\label{rem:DifferentNotion}
    The method (\ref{PeetMorgan}) from \cite{AlmostLyapunov} treats \textit{minimal attractors} (see \cite{AlmostLyapunov}) and not global attractors in the sense of Definition \ref{def:Attractor}. But both concepts are closely related and coincide under the additional assumption $X \subset B_f(\mathbf{A})$.
\end{remark}

Next, we recall the definition of Lyapunov functions.
\begin{definition}\label{def:LyapunovFunction}
    Let $U \subset \R^n$ be open. A function $V \in \C^1(U)$ is called a Lyapunov function for the dynamical system induced by $f$ if for all $x \in U$
    \begin{equation}\label{def:LyapFunction}
        V(x) \geq 0 \text{ and }\nabla V(x) \cdot f(x) \leq -V(x).
    \end{equation}
    We call $V$ a Lyapunov function for a set $A \subset U$ if $V$ is a Lyapunov function in the above sense and $A := V^{-1}(\{0\})$.
\end{definition}
With regard to Theorem \ref{thm:LyapunovStableReferse} we make the following assumption for the rest of the paper.
\begin{statement}{Assumption 1a}\label{AssumptionAttractorinInterior}
    We assume $B_f(\A) \subset \R^n$ is open, where $\A$ is the GA for $X$.
\end{statement}

\begin{statement}{Assumption 1b}\label{AssumptionAttractorinInterior}
    We assume that the MA $\mathbf{A}$ for $X$ exists..
\end{statement}

Lyapunov functions, MAs, and GAs are intimately related, one reason is the following theorem.

\begin{theorem}[\cite{teel2000smooth}]\label{thm:LyapunovStableReferse}
    A set $A$ is asymptotically stable if and only if there exists a Lyapunov function for $A$. For the MA (under Assumption 1b) that reads we get that $A$ is an MA if and only if there exists a Lyapunov function for $A$.
    Under Assumption 1a that implies that there exists a Lyapunov function $V \in \C^1(B_f(\A))$ for the GA $\A$. Conversely, if $U \subset \R^n$ and $V \in \C^1(U)$ is a Lyapunov function then $V^{-1}(\{0\})$ contains the GA whenever $M_+ \subset U$.
    
\end{theorem}

\section{Two infinite dimensional optimization problems for the global and minimal attractor}
We begin with presenting the infinite dimensional optimization problems for GAs from \cite{MilanCorbiAttractor} and \cite{AlmostLyapunov}. The first one is the LP presented in \cite{MilanCorbiAttractor} which reads
\begin{equation}\label{LPAttractorCorbiMilan}
	\begin{tabular}{llc}
		$p_1^* = \inf$ & $\int\limits_X w(x) \;dx$&\\
		s.t. & $(w,v_1,v_2)\in \C(X) \times \C^1(\R^n) \times \C^1(\R^n)$&\\
			 & $-v^1-v^2 + w \geq \one$& on $X$ \\
			 & $w \geq 0$ & on $X$\\
			 & $ \beta v^1 - \nabla v^1 \cdot f \geq 0$ & on $X$\\
			 & $ \beta v^2 + \nabla v^2 \cdot f \geq 0$ & on $X$
	\end{tabular}
\end{equation}
where $\beta > 0$ is a discounting parameter. To briefly motivate the LP (\ref{LPAttractorCorbiMilan}) the function $w$ should be viewed as an approximation of the indicator function on the GA $\A$ and $v^1$ respectively $v^2$ contain information about the dynamics and can characterize the maximum positively respectively negatively invariant sets $M_+$ and $M_-$ (\cite{MilanCorbiAttractor}) by the points where they are non-negative. The underlying idea for the LP \ref{LPAttractorCorbiMilan} is the representation $\A = M_+ \cap M_-$ \cite{Robinson},\cite{MilanCorbiAttractor}. In \cite{MilanCorbiAttractor} the authors showed $\lambda(\A) = p_1^*$ from (\ref{LPAttractorCorbiMilan}), for the Lebesgue measure $\lambda$. Each feasible $(w,v_1,v_2)$ induces a set $A := w^{-1}([1,\infty))$ that satisfies $\A \subset A \text{ and } \lambda(A \setminus \A) \leq \int\limits_X w \; d\lambda - p_1^*$, and hence, as $(w,v_1,v_2)$ gets optimal, $A$ converges to $\A$ with respect to Lebesgue measure discrepancy \cite{MilanCorbiAttractor}. The method in \cite{AlmostLyapunov} for approximating the MA is based on Lyapunov theory and the optimization problem from \cite{AlmostLyapunov} reads
\begin{equation}\label{PeetMorgan}
	\begin{tabular}{llc}
		$p_2^* = \inf$ & $\lambda\left(J^{-1}([0,1]) \right)$&\\
		s.t. & $J\in \C^1(\R^n)$&\\
		     & $J(x) \geq 0$ for all $x \in X$&\\
			 & $\nabla J \cdot f \leq 1-J$ on $X$& \\
			 & $\emptyset \neq J^{-1}\left([0,1]\right) \subset \mathring{X}$ &
	\end{tabular}
\end{equation}

If the second last constraint would be of the form $\nabla J \cdot f \leq - J$, i.e. $J$ would be a Lyapunov function, it follows directly from Theorem \ref{thm:LyapunovStableReferse} that under Assumption 1b the corresponding optimal value would give $\lambda(\mathbf{A})$. The reason for using $\nabla J \cdot f \leq 1-J$ instead is that there always exist polynomials satisfying the second constraint but this might not be true for the first. In \cite{AlmostLyapunov} the authors showed that still $p_2^*$ from (\ref{PeetMorgan}) equals $\lambda(\mathbf{A})$, the set $J^{-1}([0,1]) \subset X$ is positively invariant, contains the MA and converges to it with respect to Lebesgue measure discrepancy when $J$ gets optimal. 
Compared to the method from \cite{MilanCorbiAttractor} the big advantage is that the sets $J^{-1}([0,1])$ are always positively invariant. On the other hand, the cost term $\lambda\left(J^{-1}([0,1])\right)$ in (\ref{PeetMorgan}) is not linear (or even convex) in $J$ and evaluating the cost is difficult, while that is easy in (\ref{LPAttractorCorbiMilan}). Because there is no known equivalent convex cost for (\ref{PeetMorgan}), the authors in \cite{AlmostLyapunov} use a heuristic as objective function. The resulting SOS program minimizes $\det (P)^{\frac{1}{N_d}}$ for degree bound $d\in \N$, the matrix $P$ representing the SOS polynomial $J$, i.e. $J = z_d(x)^TP z_d(x)$ where $z_d(x)$ is the vector of monomials of degree up to $d$, and $N_d$ the dimension of $\R[x]_d$. With that heuristics, the problem becomes convex -- at the cost of exactness and convergence could not be guaranteed any more. We summarize the differences in the methods from \cite{AlmostLyapunov} (with and without heuristic), \cite{MilanCorbiAttractor} and the method proposed in this text in the following table.
\begin{equation*}
    \begin{array}{ |c|c|c|c|c| } 
 \hline
  & \text{\cite{MilanCorbiAttractor}} & \text{\cite{AlmostLyapunov}} & \text{\cite{AlmostLyapunov} + }& \text{our method}\\ 
  & & & \text{heuristic} & \\
  \hline
 \text{Convex problem} & \checkmark & & \checkmark & \checkmark \\
 \hline
 \text{Invariant sets} &  & \checkmark & \checkmark & \checkmark \\
 \hline
 \text{Convergence} & \checkmark & \checkmark & & \checkmark\\
 \hline
\end{array}
\end{equation*}
The second line, convex problem, refers to the optimization problem being convex, the third line to the property that the obtained sets are positively invariant, and the fourth line to guaranteed convergence of these sets towards the GA.

\section{A combined approach}
Here we will merge the idea of the relaxed Lyapunov equation
\begin{equation}\label{eq:RelaxedLyapunov}
\nabla J \cdot f \leq 1-J
\end{equation}
and the linear structure of (\ref{LPAttractorCorbiMilan}) with an easy evaluation of the cost term. We first note the following result, which is only a reformulation of \cite[Proposition 5]{AlmostLyapunov}, about a perturbed Lyapunov condition.

\begin{lemma}\label{lem:EpsLyap}
    Let $\varepsilon >0$ and $0\leq J \in \C^1(\R^n)$ satisfy
    \begin{equation}\label{eq:LyapPerturbed}
        \nabla J \cdot f \leq \varepsilon -J \text{ on } M_+
    \end{equation}
    then $S:= J^{-1}([0,\varepsilon]) \cap M_+$ is positively invariant and contains the GA. Further, if $S \cap X \subset \mathring{X}$ then $S \cap X \subset M_+$.\\
    For the MA: if (\ref{eq:LyapPerturbed}) holds on $X$ and $\emptyset \neq J^{-1}([0,\varepsilon]) \subset \mathring{X}$ then $J^{-1}([0,\varepsilon])$ contains the MA.
\end{lemma}

\begin{proof} This follows from \cite[Proposition 5]{AlmostLyapunov} by considering the function $\varepsilon^{-1} J$ which satisfies (\ref{eq:RelaxedLyapunov}).
\end{proof}

The close relation between Lyapunov functions and functions solving (\ref{eq:LyapPerturbed}) -- which we refer to by almost Lyapunov functions -- will allow us to transfer Lyapunov function arguments to a linear programming formulation by penalizing $\varepsilon$.

We will do this for both MA and GA and begin with the GA.

\subsection{A combined LP for global attractors}\label{subsec:GA}

We start with an infinite dimensional LP motivated by (\ref{LPAttractorCorbiMilan}) and (\ref{PeetMorgan}) but involving only Lyapunov functions. Therefore we assume Assumption 1a (but we don't assume Assumption 1b).

Later, in the main LP (\ref{LPAttractorLyapunovPerturbedX}) we relax to allow almost Lyapunov functions with the underlying idea of approximating Lyapunov functions for the global attractor from Theorem \ref{thm:LyapunovStableReferse}.  Note that the following infinite dimensional LP considers $M_+$ and not $X$ yet.
\begin{equation}\label{LPAttractorLyapunov}
	\begin{tabular}{llc}
		$p_3^* = \inf$ & $\int\limits_{M_+} w(x) \;dx$&\\
		s.t. & $(w,V)\in \C(M_+) \times \C^1(\R^n)$&\\
			 & $w + V \geq \one$& on $M_+$ \\
			 & $w \geq 0$ & on $M_+$\\
			 & $V \geq 0$ & on $M_+$\\
			 & $ \nabla V \cdot f \leq -V$ & on $M_+$
	\end{tabular}
\end{equation}
\begin{proposition}\label{thm:LyapunovLP}
        We have $p_3^* = \lambda(\A)$ for $p_3^*$ from (\ref{LPAttractorLyapunov}).
\end{proposition}
\begin{proof}
    For any feasible $(w,V)$ the function $V$ is a Lyapunov function for the GA $\A$. By Theorem \ref{thm:LyapunovStableReferse} we have $\A \subset V^{-1}(\{0\})$, i.e. $V = 0$ on $\A$. In particular from $w+V \geq 1$ on $M_+$ it follows $w \geq 1$ on $\A$ and by non-negativity of $w$, we have $\int\limits_{M_+} w(x) \; dx \geq \lambda(\A)$. That means $p^*_3 \geq \lambda(\A)$. To construct a minimizing sequence for (\ref{LPAttractorLyapunov}) let $0\leq V \in \C^1(B_f(\A))$ be a Lyapunov function for $\A$, i.e. $V^{-1}(\{0\}) = \A$, satisfying $\nabla V \cdot f \leq - V$, according to Theorem \ref{thm:LyapunovStableReferse}. By Remark \ref{rem:M+InBf} we have $M_+ \subset B_f(\A)$ and hence for $k \in \N$ the function $w_k := \max\{0, 1-k\cdot V\}$ is continuous on $M_+$ with $w_k + k\cdot V \geq 1$, i.e. the pair $(w_k,k\cdot V)$ is feasible for (\ref{LPAttractorLyapunov}) for all $k \in \N$. For $x \in \A$ we have $V(x) = 0$, thus $w_k(x) = \max\{0,1-k\cdot V(x)\} = \max\{0,1\} = 1$, and for $x \notin \A$ we have $V(x) > 0$, i.e. $w_k(x) = \{0,1-kV(x)\} \searrow 0$ as $k \rightarrow 0$. By the monotone convergence theorem it follows $\int\limits_{M_+} w_k(x) \; dx \rightarrow \lambda(\A)$, hence $p_3^* \leq \lambda(\A)$.
\end{proof}

In the next step, we relax the last constraint in the LP (\ref{LPAttractorLyapunov}) to (\ref{eq:LyapPerturbed}) but we add a penalty for not being a Lyapunov function. Further, we want to work directly on $X$ instead of on the unknown set $M_+$. That will be done through the constraint $\beta v - \nabla v \cdot f\geq 0$. That leads to the final LP with so-called discounting factor $\beta > 0$
\begin{equation}\label{LPAttractorLyapunovPerturbedX}
	\begin{tabular}{llc}
		$p_4^* =$ & $\inf\int\limits_X w(x) \;dx + \varepsilon \lambda(X)$&\\
		s.t. & $(w,J,\varepsilon,v)\in \C(X) \times \C^1(\R^n) \times [0,\infty) \times \C^1(\R^n)$&\\
			 & $w + J - v \geq \one$ \hfill on $X$&\\
			 & $w \geq 0$ \hfill on $X$&\\
			 & $J \geq 0$ \hfill on $X$&\\
			 & $ \nabla J \cdot f + J + v\leq \varepsilon$ \hfill on $X$&\\
			 & $\beta v - \nabla v \cdot f\geq 0$ \hfill on $X$& 
	\end{tabular}
\end{equation}
We will show that also for the LP (\ref{LPAttractorLyapunovPerturbedX}) the optimal value is given by the volume of the GA $\lambda(\A)$.

\begin{theorem}\label{thm:p5=lamba(A)}
    Let $X$ be compact and $f:\R^n\rightarrow \R^n$ be a locally Lipschitz continuous vector field. Let $\A$ be the global attractor for the dynamical system induced by $f$ with constraint set $X$. Then for any $\beta >0$ we have for $p_4^*$ in (\ref{LPAttractorLyapunovPerturbedX})
    \begin{equation*}
        p_4^* = \lambda(\A).
    \end{equation*}
    Further for any feasible $(w,J,\varepsilon,v)$ we have $J^{-1}([0,\varepsilon]) \cap M_+$ is positively invariant and
    \begin{equation}\label{def:K}
    \A \subset K:= J^{-1}([0,\varepsilon]) \cap v^{-1}([0,\infty)) \cap X
    \end{equation}
    with
    \begin{equation}\label{eq:ApproxAttractorLyapunovPerturbEstimate}
        \lambda(K \setminus \A) \leq \int\limits_X w(x) \;dx + \varepsilon \lambda(X) - p^*_5
    \end{equation}
    which converges to zero as $(w,J,\varepsilon,v)$ gets optimal for (\ref{LPAttractorLyapunovPerturbedX}).
\end{theorem}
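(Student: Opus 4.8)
The plan is to prove $p_5^*\ge\lambda(\A)$ and $p_5^*\le\lambda(\A)$ separately; the lower bound will simultaneously yield the inclusion $\A\subseteq K$, the positive invariance of $J^{-1}([0,\varepsilon])$, and the estimate (\ref{eq:ApproxAttractorLyapunovPerturbEstimate}), while the upper bound needs an explicit minimizing sequence and is where the actual work is.

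\emph{Lower bound.} Let $(w,J,\varepsilon,v)$ be feasible. The key preliminary step is $v\ge 0$ on $M_+$. For $x\in M_+$ the trajectory $\varphi_t(x)$ stays in $X$, so the constraint $\beta v-\nabla v\cdot f\ge 0$ makes $t\mapsto e^{-\beta t}v(\varphi_t(x))$ non-increasing on $[0,\infty)$; this function is bounded (continuity of $v$ on the compact set $X$), hence tends to $0$ as $t\to\infty$, and a non-increasing function with limit $0$ is everywhere non-negative, so evaluating at $t=0$ gives $v(x)\ge 0$. Substituting $v\ge 0$ into the fourth constraint gives $\nabla J\cdot f\le -J+\varepsilon$ on $M_+$, so, since $J\ge 0$ on $X\supseteq M_+$, Lemma \ref{lem:EpsLyap} tells us $J^{-1}([0,\varepsilon])$ is positively invariant and contains $\A$; together with $\A\subseteq M_+\subseteq v^{-1}([0,\infty))$ and $\A\subseteq X$ this is exactly $\A\subseteq K$. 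On $K$ we have $J\le\varepsilon$ and $v\ge 0$, so the first constraint forces $w\ge 1-\varepsilon$ on $K$, while $w\ge 0$ on all of $X$; a short case distinction ($\varepsilon\le 1$ versus $\varepsilon>1$), using $\lambda(X)\ge\lambda(K)$, then gives $\int_X w\,\mathrm dx+\varepsilon\lambda(X)\ge\lambda(K)=\lambda(\A)+\lambda(K\setminus\A)$. Taking the infimum over feasible tuples yields $p_5^*\ge\lambda(\A)$, and once equality is in hand the same inequality, rearranged, is (\ref{eq:ApproxAttractorLyapunovPerturbEstimate}); the stated convergence of $\lambda(K\setminus\A)$ to $0$ along a minimizing sequence is then immediate.

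\emph{Upper bound.} By Theorem \ref{thm:LyapunovStableReferse} there is a smooth Lyapunov function $V_0$ for $\A$. Pick $g\in\C^1(\R^n)$ with $g\ge 0$ and $g^{-1}(0)=M_+$ (possible since $M_+$ is closed; note $\nabla g=0$ on $M_+$ because these points are global minimizers of $g$) and set $V:=V_0+g$, which is still a Lyapunov function for $\A$ and now satisfies $V^{-1}(\{0\})=\A$. For $k\in\N$ put $J_k:=kV$, $w_k:=\max\{0,1-kV\}$; then $w_k+J_k=\max\{kV,1\}\ge 1$ on $X$, $w_k\ge 0$, $J_k\ge 0$, and $w_k\downarrow\mathbf 1_{\A}$, so $\int_X w_k\,\mathrm dx\to\lambda(\A)$ by monotone convergence. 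It remains to provide $\varepsilon_k\downarrow 0$ and $v_k\in\C^1(\R^n)$ with $\beta v_k-\nabla v_k\cdot f\ge 0$ on $X$ and $v_k\le\psi_k:=\min\{\max\{kV,1\}-1,\,\varepsilon_k-k(\nabla V\cdot f+V)\}$ on $X$: the former is the fifth constraint and $v_k\le\psi_k$ delivers the first and the fourth constraints, so $(w_k,J_k,\varepsilon_k,v_k)$ is feasible with cost $\int_X w_k\,\mathrm dx+\varepsilon_k\lambda(X)\to\lambda(\A)$. Here $\psi_k\ge 0$ on $M_+$ — the first entry is non-negative and $-k(\nabla V\cdot f+V)\ge 0$ there because $V$ is a Lyapunov function — which is compatible with the sign of $v_k$ forced on $M_+$, while on $X\setminus M_+$ the ceiling $\psi_k$ may be negative; producing $v_k$ amounts to finding a smooth function with $\nabla v\cdot f\le\beta v$ on $X$ lying below $\psi_k$. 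A natural candidate is a mollification of $x\mapsto\inf\{e^{-\beta t}\psi_k(\varphi_t(x)):t\ge 0,\,\varphi_s(x)\in X\text{ for all }s\in[0,t]\}$, which is bounded (because $\psi_k$ is), satisfies $\nabla v\cdot f\le\beta v$ in the viscosity sense, and lies below $\psi_k$; this is precisely the type of auxiliary function built in \cite{MilanCorbiAttractor} and \cite{korda2014convex} so as to let $M_+$ enter the LP only through the constraint on $v$, and I would invoke their construction rather than reproduce it.

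\emph{Main obstacle.} The delicate point is this last step: turning the value function — which is in general not $\C^1$ — into an honest $\C^1$ function that still satisfies $\nabla v\cdot f\le\beta v$ on $X$ and still lies below $\psi_k$, handling in particular its behaviour near $\partial M_+$ and near the portion of $\partial X$ through which trajectories exit $X$; the positive slack $\varepsilon_k$ is exactly what absorbs the mollification error, which is why one ends up with a minimizing sequence rather than an optimizer. Everything else — the lower bound, the sign of $v$ on $M_+$, the identity $\{V=0\}=\A$, the feasibility bookkeeping — is routine. As a consistency check, note that since $v\ge 0$ on $M_+$, every feasible tuple of (\ref{LPAttractorLyapunovPerturbedX}) restricts to a feasible tuple $(w,J,\varepsilon)$ of (\ref{LPAttractorLyapunovPerturbed}) with the same cost, so $p_4^*\le p_5^*$, which combined with the lower bound already re-proves $p_5^*\ge\lambda(\A)=p_4^*$.
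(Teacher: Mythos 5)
Your lower bound is correct and is essentially the paper's argument: $v\ge 0$ on $M_+$ from the fifth constraint (you spell out the Gronwall argument that the paper delegates to \cite{korda2014convex}), hence $(w,J,\varepsilon)$ is feasible for (\ref{LPAttractorLyapunovPerturbed}), Lemma \ref{lem:EpsLyap} gives invariance and $\A\subseteq K$, and $w\ge 1-\varepsilon$ on $K$ yields the volume estimate. The problem is the upper bound, where you depart from the paper and the departure does not close.

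The concrete error is in your candidate for $v_k$. The fifth constraint $\beta v-\nabla v\cdot f\ge 0$ is equivalent to $t\mapsto e^{-\beta t}v(\varphi_t(x))$ being \emph{non-increasing} (you use exactly this in your lower bound). But the function $u(x)=\inf\{e^{-\beta t}\psi_k(\varphi_t(x)):t\ge 0,\ \varphi_s(x)\in X\ \forall s\in[0,t]\}$ satisfies the dynamic programming inequality $u(x)\le e^{-\beta s}u(\varphi_s(x))$, i.e.\ $t\mapsto e^{-\beta t}u(\varphi_t(x))$ is non-\emph{decreasing}, which formally gives $\beta u-\nabla u\cdot f\le 0$ — the reverse of the required constraint. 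So the one object your upper bound hinges on does not satisfy the LP. Moreover, even after fixing the sign (one would need backward-time orbits, with the attendant issues of backward trajectories leaving $X$), what \cite{MilanCorbiAttractor} and \cite{korda2014convex} actually provide is not "a subsolution below an arbitrary ceiling $\psi_k$" but a single fixed function $v$ with $\beta v-\nabla v\cdot f\ge 0$, $v=0$ on $M_+$ and $v<0$ on $X\setminus M_+$; getting below $\psi_k$ then requires the additional observation that $\{\psi_k<0\}$ has closure disjoint from $M_+$ (so $-v\ge\rho>0$ there) and a multiplication of $v$ by a large constant $k$. That scaling step is the actual content of the paper's proof of $p_5^*\le p_4^*$ (the paper augments an arbitrary feasible point of (\ref{LPAttractorLyapunovPerturbed}) to $(w+\delta,J,\varepsilon+\delta,kv)$, the $\delta$-slack guaranteeing the separation from $M_+$), and it is absent from your sketch. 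Your remaining upper-bound bookkeeping ($V=V_0+g$ with $V^{-1}(\{0\})=\A$, $w_k=\max\{0,1-kV\}$, monotone/dominated convergence of the cost) is fine, and indeed reproves Propositions \ref{thm:LyapunovLP}--\ref{thm:LyapunovPerturbedLP}; the cleaner route is to cite those and only supply the passage from $M_+$ to $X$ via the scaled $v$.
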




\begin{proof}
    The essential observation is that any feasible $(w,J,\varepsilon,v)$ satisfies $v \geq 0$ on $M_+$. This is implied by the last constraint in (\ref{LPAttractorLyapunovPerturbedX}) \cite[Lemma 4]{korda2014convex}. It follows that $\nabla J \cdot f \leq \varepsilon - J$ on $M_+$, and hence $J \leq \varepsilon$ on $\A$ and $J^{-1}([0,\varepsilon])$ is positively invariant and contains $\A$ by Lemma \ref{lem:EpsLyap}, as well as we have $w+J \geq 1$ on $M_+$, and hence $w \geq 1-\varepsilon$ on $\A$. That gives
    \begin{align*}
    \int\limits_X w(x) \; dx + \varepsilon \lambda(X) &\geq (1-\varepsilon)\lambda(\A) + \varepsilon\lambda(X) \geq (1-\varepsilon) \lambda(\A) + \varepsilon \lambda(\A) = \lambda(\A),
\end{align*}
    i.e. $p_4^* \geq \lambda(\A)$. The remaining inequality $\lambda(\A) \leq p^*_4$ is the technical part in this proof. We begin by using a construction from \cite{MilanCorbiAttractor} to find a function $v \in \C^1(\R^n)$ with
    \begin{equation}\label{eq:Goodv}
        \beta v - \nabla v \cdot f = 0, \; \; v = 0 \text{ on } M_+ \text{ and } v < 0 \text{ on } X \setminus M_+.
    \end{equation}
    We show that for any $(w,J)$ feasible for (\ref{LPAttractorLyapunov}) and $\varepsilon > 0$ we can find $k = k(\varepsilon)\in \N$ such that $(\tilde{w},\tilde{J},2\varepsilon,k\cdot v)$ is feasible for (\ref{LPAttractorLyapunovPerturbedX}), where $\tilde{w}$ and $\tilde{J}$ are such that the corresponding cost for $(\tilde{w},\tilde{J},2\varepsilon,k\cdot v)$
    is close to the cost of $(w,J)$ for the LP (\ref{LPAttractorLyapunov}). Since $w$ is only non-negative on $M_+$ but (\ref{LPAttractorLyapunovPerturbedX}) requires to be non-negative on $X$ we choose $\tilde{w}$ with $\tilde{w}(x) := \max\{\hat{w} - r \cdot\dist(x,M_+),0\}$ for $r> 0$ large enough (where $\hat{w}$ is any continuous extension of $w$ to $X$, which exists by Tietze's extension theorem) such that
    \begin{equation}\label{eq:CosttildeW}\int\limits_X \tilde{w}(x) \; dx \leq \int\limits_{M+}w(x) \; dx + \varepsilon.
    \end{equation}
    To construct $\tilde{J}$ let $U_1:= J^{-1}([-\nicefrac{\varepsilon}{2},\infty)) \cap X \supset M_+$ and $U_2:= J^{-1}((-\infty,-\varepsilon]) \cap X$. By \cite{lee2013smooth} Theorem 2.29 we can find  a non-negative function $\phi \in \C^1(\R^n)$ with $\phi = 0$ on $U_1 \supset M_+$ and $\phi \geq \min\limits_{x \in X} J(x)$ on $U_2$. Then the function $\tilde{J} := J + \varepsilon +\phi$ is $\C^1$, is non-negative and $\tilde{J}$ (and its derivative) coincides with $J + \varepsilon$ (and its derivative) on $M_+$. Now we consider the choice of $k$ such that $(\tilde{w},\tilde{J}, 2\varepsilon,k\cdot v)$ becomes feasible for (\ref{LPAttractorLyapunovPerturbedX}), i.e. also the first and fourth constraint in (\ref{LPAttractorLyapunovPerturbedX}) are satisfied. Because on $M_+$ we have $w+J + \varepsilon \geq 1 + \varepsilon  >1$, $\nabla J \cdot f + J \leq 0 < \varepsilon$, $\tilde{w} = w$, $\tilde{J} = J$ and $\nabla \tilde{J} = \nabla J$, there is an open neighbourhood $U$ of $M_+$ such that
    \begin{equation}\label{eq:tildew+TildeJ}
        \tilde{w}+\tilde{J} >1 \text{ and } \nabla \tilde{J} \cdot f + \tilde{J} < 2\varepsilon  \text{ on } U
    \end{equation}
    Because $v$ is non-positive and vanishes exactly on $M_+ \subset U_1$ we have $-v \geq \rho$ on $X \setminus U$ for some $\rho > 0$. Let $k \in \N$ with
    \begin{equation}\label{eq:ChoiceK}
        k\geq \rho^{-1}\max\limits_{x \in X \setminus U} \{1 - \tilde{w}(x) - \tilde{J}(x), \nabla \tilde{J}(x) \cdot f(x) + \tilde{J}(x) - 2\varepsilon\}.
    \end{equation}
    By non-positivity of $v$ and (\ref{eq:tildew+TildeJ}) we have
    \begin{equation*}
        \tilde{w}+\tilde{J} - k \cdot v >1 \text{ and } \nabla \tilde{J} \cdot f + \tilde{J} + k \cdot v < 2\varepsilon  \text{ on } U
    \end{equation*}
    For $x \in X \setminus U$ we get by our choice of $k$, (\ref{eq:ChoiceK}), that
    \begin{eqnarray*}
        \tilde{w}(x)+ \tilde{J}(x) -  k\cdot v(x) \geq \tilde{w}(x) + \tilde{J}(x) +  k\rho \overset{(\ref{eq:ChoiceK})}{\geq} 1
    \end{eqnarray*}
    and similarly for the constraint $\nabla \tilde{J} \cdot f + \tilde{J} + k\cdot v \leq 2\varepsilon$. Therefore, $(\tilde{w},\tilde{J},2 \varepsilon,k\cdot v)$ is feasible for (\ref{LPAttractorLyapunovPerturbedX}). Using (\ref{eq:CosttildeW}) we can bound the corresponding cost $\int\limits_X \tilde{w}(x) \;dx + 2 \varepsilon \lambda(X)$ by
    \begin{equation*}
        \int\limits_X \tilde{w} \; dx + 2\varepsilon \lambda(X) \leq \int\limits_{M_+} w(x) \; dx + \varepsilon + 2\varepsilon \lambda(X).
    \end{equation*}
    Since $\varepsilon > 0$ was arbitrary we conclude $p^*_4 \leq p^*_3 = \lambda(\A)$. Finally, it remains to show $\A \subset K$ for $K$ given by (\ref{def:K}) and the estimate (\ref{eq:ApproxAttractorLyapunovPerturbEstimate}) for any feasible $(w,J,\varepsilon,v)$. From Lemma \ref{lem:EpsLyap} and the property $M_+ \subset v^{-1}([0,\infty))$ (see the first line of the proof) it follows $\A \subset K$. Further, by definition of $K$, we obtain from the first constraint in the LP (\ref{LPAttractorLyapunovPerturbedX})
    \begin{equation}\label{eq:estimatew}
        w \geq 1- J + v \geq 1-\varepsilon \text{ on } K.
    \end{equation}
    Non-negativity of $w$ now gives
    \begin{eqnarray}\label{eq:LambdaK}
        \int\limits_X w(x) \; dx + \varepsilon \lambda(X) & \geq & \int\limits_{K} 1-\varepsilon \; dx + \varepsilon \lambda(X) = (1-\varepsilon)\lambda(K) + \varepsilon \lambda(X) \geq \lambda(K).
    \end{eqnarray}
    Subtracting $p^*_5 = \lambda(\A)$ on both sides finishes the proof.
\end{proof}

From Lemma \ref{lem:EpsLyap} we derive the following corollary.

\begin{corollary}\label{cor:InvariantSetInterior}
    In Theorem \ref{thm:p5=lamba(A)}, if $J^{-1}([0,\varepsilon]) \subset \mathring{X}$ then $J^{-1}([0,\varepsilon])$ is positively invariant and the sets $X$ and $v^{-1}([0,\infty)$ can be omitted in (\ref{def:K}).
\end{corollary}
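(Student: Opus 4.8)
The plan is to show that, under the extra hypothesis $J^{-1}([0,\varepsilon]) \subset \mathring{X}$, the two additional intersections in (\ref{def:K}) are redundant, i.e. $K = J^{-1}([0,\varepsilon])$. First I would recall the key fact extracted at the beginning of the proof of Theorem \ref{thm:p5=lamba(A)}: for any feasible quadruple $(w,J,\varepsilon,v)$ one has $v \geq 0$ on $M_+$ (this is \cite[Lemma 3]{korda2014convex} applied to the last constraint of (\ref{LPAttractorLyapunovPerturbedX})), and consequently $\nabla J \cdot f \leq -(J-\varepsilon)$ on $M_+$. Thus $J$ satisfies the hypothesis (\ref{eq:LyapPerturbed}) of Lemma \ref{lem:EpsLyap}, and we may apply that lemma with $S := J^{-1}([0,\varepsilon])$.

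Next I would use the second assertion of Lemma \ref{lem:EpsLyap}. Since $S \subset \mathring{X} \subset X$, we have $S \cap X = S$, so the hypothesis "$S \cap X \subset \mathring{X}$" of the lemma is exactly our assumption, and the lemma yields $S = S \cap X \subset M_+$. Combining this inclusion with $v \geq 0$ on $M_+$ gives $S \subset v^{-1}([0,\infty))$, while $S \subset \mathring{X} \subset X$ is immediate. Therefore
\[
  K = J^{-1}([0,\varepsilon]) \cap v^{-1}([0,\infty)) \cap X = S \cap v^{-1}([0,\infty)) \cap X = S = J^{-1}([0,\varepsilon]),
\]
so all conclusions of Theorem \ref{thm:p5=lamba(A)} — positive invariance of $J^{-1}([0,\varepsilon])$, the inclusion $\A \subset J^{-1}([0,\varepsilon])$, and the volume estimate (\ref{eq:ApproxAttractorLyapunovPerturbEstimate}) — hold verbatim with $K$ replaced by $J^{-1}([0,\varepsilon])$.

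I do not expect any real obstacle here; the corollary is essentially a bookkeeping consequence of Lemma \ref{lem:EpsLyap}. The only points requiring a little care are: (i) correctly identifying $S \cap X$ with $S$ so that the "further" clause of Lemma \ref{lem:EpsLyap} can be invoked, and (ii) noting that what was established earlier is $v \geq 0$ on $M_+$ (not merely on $X$), which is precisely the set containing $S$. For completeness one could also remark that $\A \subset S$ is already guaranteed by the first assertion of Lemma \ref{lem:EpsLyap}, so the intersection with $v^{-1}([0,\infty))$ in (\ref{def:K}) was, in the general case, needed only to keep the approximating set inside the constraint set $X$ on which feasibility is imposed, and becomes superfluous exactly when $J^{-1}([0,\varepsilon]) \subset \mathring{X}$.
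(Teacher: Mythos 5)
Your proof is correct and follows essentially the same route as the paper: both invoke the ``further'' clause of Lemma \ref{lem:EpsLyap} to conclude $J^{-1}([0,\varepsilon]) \subset M_+$ under the interior hypothesis, and then use $v \geq 0$ on $M_+$ to see that the intersections with $v^{-1}([0,\infty))$ and $X$ are redundant. Your explicit set identity $K = J^{-1}([0,\varepsilon])$ is just a slightly more detailed packaging of the paper's remark that the estimate (\ref{eq:estimatew}) carries over to all of $J^{-1}([0,\varepsilon])$.
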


\begin{proof}
    By Lemma \ref{lem:EpsLyap} we have $J^{-1}([0,\varepsilon]) \cap X \subset M_+$ and the crucial estimate (\ref{eq:estimatew}) holds even on $J^{-1}([0,\varepsilon])$.
\end{proof}

Note that \cite{AlmostLyapunov} provides a minimizing sequence for (\ref{LPAttractorLyapunovPerturbedX}) of almost Lyapunov functions $J$ with $J^{-1}([0,\varepsilon]) \subset \mathring{X}$ if $\A \subset \mathring{X}$.

\begin{remark} In light of Remark \ref{rem:DifferentNotion} we mention that our approach can also be applied to the notion of minimal attractors from \cite{AlmostLyapunov}. In contrast to global attractors there is no $M_+$ appearing in the definition of minimal attractors but $X \subset B_f(\A)$ is assumed additionally.  Because the decision variable $v$ in the LP (\ref{LPAttractorLyapunovPerturbedX}) was only needed to incorporate the $M_+$ we can just remove the decision variable $v$ in (\ref{LPAttractorLyapunovPerturbedX}) and obtain an analog result of Theorem \ref{thm:p5=lamba(A)} for minimal attractors based on \cite{AlmostLyapunov} where it was shown $p_2^* = \lambda(\A)$ for $p_2^*$ from (\ref{PeetMorgan}).
\end{remark}

\begin{remark}
    The dual problem of the LP (\ref{LPAttractorLyapunovPerturbedX}) acts on the space of Borel measures on $X$. We did not include the dual problem here because it gives less insight into the GA (\cite{MilanCorbiAttractor}).
\end{remark}

\begin{remark}
Discrete time systems can be handled in an analog way. We refer to \cite{MilanCorbiAttractor} for details about a similar treatment for discrete time systems.
\end{remark}



\subsection{A combined LP for minimal attractors}

In this section, we give an LP formulation for the minimal attractor, again by combining \cite{AlmostLyapunov} and \cite{MilanCorbiAttractor}. In this section, we assume Assumption 1b, i.e. that the MA for $X$ exists.

Due to the definition of MA, we do not have a need for the maximum positively invariant set $M_+$, thus we don't need the decision variable $v$ in (\ref{LPAttractorLyapunovPerturbedX}).

We propose the following LP
\begin{equation}\label{LPMAttractorLyapunovPerturbedX}
	\begin{tabular}{llc}
		$p_{\mathrm{MA}}^* =$ & $\inf\int\limits_X w(x) \;dx + \varepsilon \lambda(X)$&\\
		s.t. & $(w,J,\varepsilon)\in \C(X) \times \C^1(\R^n) \times [0,\infty)$&\\
			 & $w + J \geq \one$ \hfill on $X$&\\
			 & $w \geq 0$ \hfill on $X$&\\
			 & $J \geq 0$ \hfill on $X$&\\
			 & $ \nabla J \cdot f + J \leq \varepsilon$ \hfill on $X$&
	\end{tabular}
\end{equation}
and, similarly to Theorem \ref{thm:p5=lamba(A)}, we show next that $p_{\mathrm{MA}}^* = \lambda(\mathbf{A})$.

\begin{theorem}\label{thm:pMA=lamba(A)}
    Let $X$ be compact and $f:\R^n\rightarrow \R^n$ be a locally Lipschitz continuous vector field. Let $\mathbf{A}$ be the minimal attractor for $X$ for the dynamical system induced by $f$. Then for $p_{\mathrm{MA}}^*$ in (\ref{LPAttractorLyapunovPerturbedX}) it holds
    \begin{equation*}
        p_{\mathrm{MA}}^* = \lambda(\mathbf{A}).
    \end{equation*}
    For any feasible $(w,J,\varepsilon)$ we have
    \begin{equation}\label{def:MAK}
    \mathbf{A} \subset K:= J^{-1}([0,\varepsilon]) \cap X
    \end{equation}
    with
    \begin{equation}\label{eq:ApproxMAttractorLyapunovPerturbEstimate}
        \lambda(K \setminus \mathbf{A}) \leq \int\limits_X w(x) \;dx + \varepsilon \lambda(X) - p^*_{\mathrm{MA}}
    \end{equation}
    which converges to zero as $(w,J,\varepsilon,v)$ gets optimal for (\ref{LPAttractorLyapunovPerturbedX}). Further $J^{-1}([0,\varepsilon])$ is positively invariant if $J^{-1}([0,\varepsilon]) \subset \mathring{X}$.
\end{theorem}

\begin{proof}
    We argue similarly as in Section \ref{subsec:GA} but can avoid many technical steps because the set $M_+$ (and correspondingly $v$) does not appear in the LP (\ref{LPMAttractorLyapunovPerturbedX}). Using a Lyapunov function $V$ for $\mathbf{A}$ shows, as in Proposition \ref{thm:LyapunovLP} that $p_{\mathrm{MA}}^* \leq \lambda(\mathbf{A})$. The inequality $p_{\mathrm{MA}}^* \geq \lambda(\mathbf{A})$ follows as in the proof of Theorem \ref{thm:p5=lamba(A)}. For any feasible $(w,J,\varepsilon)$ we have $J\leq \varepsilon$ on $\mathbf{A}$. Hence $w+J \geq 1$ implies $w \geq 1-\varepsilon$ on $\mathbf{A}$. That gives
    \begin{align*}
    \int\limits_X w(x) \; dx + \varepsilon \lambda(X) &\geq (1-\varepsilon)\lambda(\mathbf{A}) + \varepsilon\lambda(X) \geq (1-\varepsilon) \lambda(\mathbf{A}) + \varepsilon \lambda(\mathbf{A}) = \lambda(\mathbf{A}),
\end{align*}
    That $J^{-1}([0,\varepsilon]) \cap X$ contains $\mathbf{A}$ as well as the last statement in Theorem \ref{thm:pMA=lamba(A)} follows from Lemma \ref{lem:EpsLyap}. The estimate (\ref{eq:ApproxMAttractorLyapunovPerturbEstimate}) follows as for (\ref{eq:ApproxAttractorLyapunovPerturbEstimate}) via (\ref{eq:LambdaK}) from $J\leq \varepsilon$ on $\mathbf{A}$.
\end{proof}

\section{Solving the linear programs}\label{Sec:SolvingTheLinearPrograms}

As in \cite{AlmostLyapunov} and \cite{MilanCorbiAttractor} we approach the infinite dimensional LPs (\ref{LPAttractorLyapunovPerturbedX}), (\ref{LPMAttractorLyapunovPerturbedX}) via polynomials and sum-of-squares techniques based on polynomial optimization \cite{Las01}, \cite{lasserre2009moments}. The resulting SOS problem can be reformulated as a semidefinite program (SDP) \cite{lasserre2009moments} \cite{Las01} and the LP (\ref{LPAttractorLyapunovPerturbedX}) will be solved via a hierarchy of these finite dimensional SDPs. We only present the corresponding SOS respectively SDP problems for the LP (\ref{LPAttractorLyapunovPerturbedX}) for the GA. For the MA the procedure is similar.

The idea is to replace the decision variables $w,J,v$ by polynomials (this will be justified by the Stone-Weierstraß theorem) and the non-negativity will be algebraically certified by an SOS condition. This is a standard procedure and we refer to~\cite{lasserre2009moments} and \cite{korda2014convex} for details. In order to apply these algebraic certificates we treat polynomial vector fields $f$ and compact basic semi-algebraic constraint sets $X$. A set $X$ is called compact basic semi-algebraic if there exist polynomials $g_1,\ldots,g_j \in \R[x_1,\ldots,x_n]$ such that
\begin{equation*}
    X = \{x \in \R^n: g_i(x) \geq 0 \text{ for } i = 1,\ldots,j\}.
\end{equation*}
Further, one of the $g_i$ is given by $g_i(x) = R_X^2 - \|x\|_2^2$ for some $R_X \in \R$. For each degree $k \in \N$ we get the following SOS program
\begin{equation}\label{SDP}
	\begin{tabular}{llc}
		$d_k:=$ & $\inf \mathbf{w}' \mathbf{l} +  \varepsilon \mathbf{l}_0$& \vspace{0.5mm}\\
		s.t. & $w,J,v \in \R[x]_k, \varepsilon \in \R$, $p_0,q_0,r_0,s_0 \in \R[x]_{\nicefrac{k}{2}}$&\\
		     & $p_i,q_i,r_i,s_i \in \R[x]_{\nicefrac{(k-\deg (g_i))}{2}}$ for $i = 1,\ldots,j$&\\
		     & $\varepsilon \geq 0$&\\
			 & $w + J - v - 1 = p_0^2 + \sum\limits_{i = 1}^j p_i^2 g_i$&\\
			 & $w(x) = q_0^2 + \sum\limits_{i = 1}^j q_i^2 g_i$&\\
			 & $ \varepsilon - \nabla J \cdot f - J - v = t_0^2 + \sum\limits_{i = 1}^j t_i^2 g_i$ &\\
			 & $ \beta v - \nabla v \cdot f = s_0^2 + \sum\limits_{i = 1}^j s_i^2 g_i$ &
	\end{tabular}
\end{equation}
where $\mathbf{w}'$ is the vector of coefficients of the polynomial $w$ and $\mathbf{l}$ is the vector of the moments of the Lebesgue measure on $X$ (i.e., $\mathbf{l}_\alpha = \int_X x^\alpha \, dx$, $\alpha \in \mathbb{N}^n$, $\sum_i \alpha_i \le k$), both indexed in the same basis of $\R[x]_k$, which gives $\mathbf{w}'\mathbf{l} = \int\limits_X w(x) \; dx$ and $\mathbf{l}_0$ corresponds to $\alpha = 0$, i.e. $\mathbf{l}_0 = \lambda(X)$.

The following theorem shows that the properties of the LP (\ref{LPAttractorLyapunovPerturbedX}) are inherited to the finite dimensional SOS program and that their optimal values converge to $\lambda(\A)$ as well.

\begin{theorem}
    For (\ref{SDP}) we have $d_k \searrow \lambda(\A)$ as $k \rightarrow \infty$, for any feasible $(w_k,J_k,\varepsilon_k,v_k)$ (and corresponding $p_i,q_i,r_i,s_i$) the set $J_k^{-1}([0,\varepsilon_k])$ is positively invariant and the set $A_k:= J_k^{-1}([0,\varepsilon_k]) \cap v_k^{-1}([0,\infty) \cap X$ contains the GA $\A$. If $(w_k,J_k,\varepsilon_k,v_k)$ is optimal for (\ref{SDP}) then we have\vspace{-1.7mm}
    \begin{equation}\label{eq:convergenceEstimateSDP}
        \lambda(A_k \setminus \A) \leq d_k - \lambda(\A) \rightarrow 0 \text{ as } k \rightarrow \infty.
    \end{equation}
    $ $
\end{theorem}
\vspace{-4mm}
\begin{proof} Since the SOS problems (\ref{SDP}) are tightenings of the LP (\ref{LPAttractorLyapunovPerturbedX}) we only need to show convergence and (\ref{eq:convergenceEstimateSDP}). The proof is very similar to the one of \cite[Theorem 5 and 6]{MilanCorbiAttractor}. 
    The additional perturbation parameter $\varepsilon$ guarantees that we can always find (almost) optimal polynomials $J$ satisfying the constraint $\nabla J \cdot f + J \leq \varepsilon$ by increasing $\varepsilon$ slightly if necessary. To see this let $(w,J,\varepsilon,v)$ be feasible for the LP (\ref{LPAttractorLyapunovPerturbedX}). Then for any $\delta > 0$ the quadrupel $(w+\delta,J+ \delta,\varepsilon + 3\delta, v + \delta)$ is strictly feasible and the cost only changes by $4\delta \lambda(X)$. By the Stone-Weierstraß theorem we can find polynomials $(p_w,p_J,p_v)$ close enough to $(w+\delta,J+\delta ,v+\delta)$ in the topology of $\C^1$ such that $(p_w,p_J,\varepsilon + 3\delta,p_v)$ is still strictly feasible for the LP (\ref{LPAttractorLyapunovPerturbedX}) (because $(w+\delta,J+ \delta ,\varepsilon + 3\delta, v + \delta)$ is strictly feasible). An SOS representation of $(p_w,p_J,p_v)$ follows then from Putinar's positivstellensatz \cite{Putinar} which shows that $(p_w,p_J,p_v)$ is feasible for (\ref{SDP}) for large enough $k$. Since $\delta > 0$ was arbitrary it follows that $d_k \rightarrow p^*_5 = \lambda(\A)$. The proof for (\ref{eq:convergenceEstimateSDP}) is the same as the one for (\ref{eq:ApproxAttractorLyapunovPerturbEstimate}).
\end{proof}

A similar extension as in Corollary \ref{cor:InvariantSetInterior} can be obtained by the same arguments.

The cost of having guaranteed bounds and finding the global optima using SOS methods comes at the price of expensive scaling, that is, high degree $d$ or dimension $n$ makes the corresponding SDP intractable for current solvers/memory/computation power. Therefore, further structure should be exploited, such as sparsity \cite{schlosser2020sparse}, \cite{tacchi2020approximating}, \cite{wang2021exploiting} or symmetry \cite{goluskin2019bounds}. 

\section{Numerical examples}
We illustrate our approach by three numerical examples that have been used in \cite{MilanCorbiAttractor} and \cite{AlmostLyapunov}. The first one is the following globally asymptotically stable system with attractor $\A = \{(0,0)\}$, which does not allow for a polynomial Lyapunov function\cite{ahmadi2011globally}
\begin{align}\label{eq:NoLyapunov}
    \dot{x}(t) &= -2y(t)\left(-x(t)^4 +2x(t)^2y(t)^2+y(t)^4\right) -\notag\\
    & \; \; \; 2x(t)(x(t)^2+y(t)^2)\left(x(t)^4+2x(t)^2y(t)^2-y(t)^2\right)\notag\\
    \dot{y}(t) &= 2x(t)\left(x(t)^4 + 2x(t)^2y(t)^2-y(t)^4\right) - \notag\\
    & \; \; \;  2y(t)(x(t)^2+y(t)^2)\left(-x(t)^4+2x(t)^2y(t)^2+y(t)^4\right).
\end{align}
The second example is the Van--der--Pol oscillator
\begin{equation}\label{eq:VanderPol}
	\dot{x}(t) = 2y(t),  \quad \dot{y}(t) = -0.8x(t) - 10(x(t)^2-0.21)y(t).
\end{equation}
And the third example is the H\'{e}non map as an example of discrete time systems, given by
\begin{equation}\label{eq:Henon}
	x_{m+1} = \frac{2}{3}(1+y_m) - 2.1 x_m^2, \quad  y_{m+1} = 0.45 x_m.
\end{equation}

For the Van--der--Pol oscillator we observe as in \cite{AlmostLyapunov} and \cite{MilanCorbiAttractor} that the proposed method works very well and is comparable with the method in\cite{AlmostLyapunov}, see Figure \ref{figVanDerPol}. We notice that the approximation from \cite{MilanCorbiAttractor} seems to perform slightly better than our approach in terms of volume discrepancy with the real attractor.

\begin{figure}[!h]
\begin{picture}(0,200)
\put(-20,0){\includegraphics[width=95mm]{./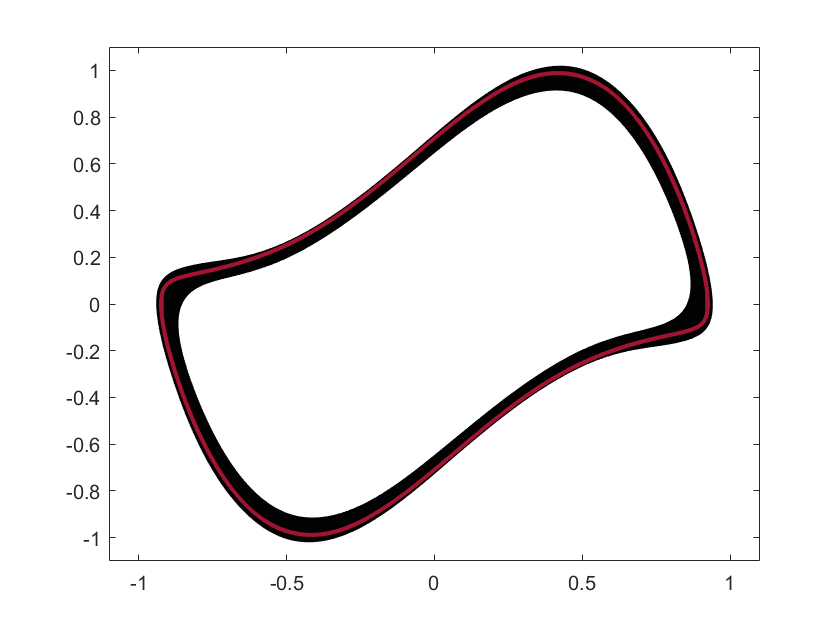}}
\put(240,10){\includegraphics[width=80.5mm]{./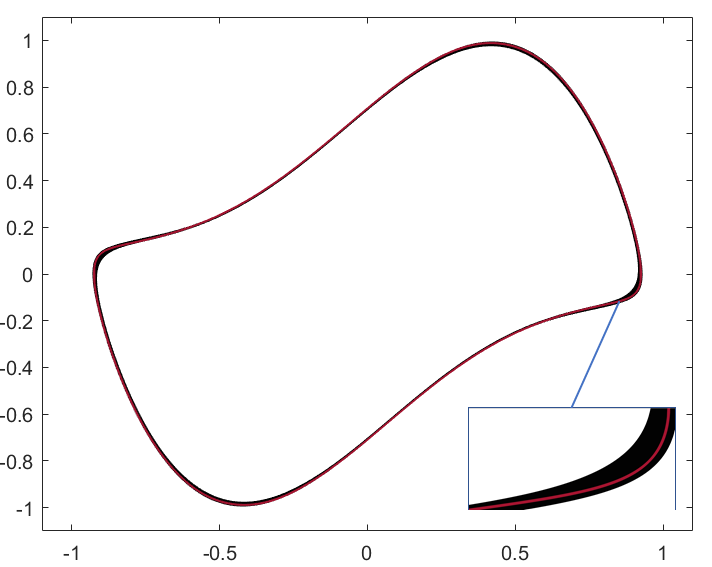}}
\end{picture}
\vspace{-1mm}
\caption{\footnotesize{Outer approximations (black) of the attractor (red) for the Van-der-Pol oscillator for $X = \{x : 0.4 \le\|x\|_2 \le 2\}$. Left: approximation, degree 12 polynomials, and $\beta = 0.2$. Right: approximation for polynomials up to degree 16 and $\beta = 0.2$.}}\label{figVanDerPol}
\end{figure}

For the system (\ref{eq:NoLyapunov}) we notice some numerical instabilities in the decision variable $\varepsilon$ in (\ref{SDP}) when solving the SDPs using Yalmip \cite{lfberg2004toolbox} and Mosek \cite{mosek} (Figure \ref{figNoLyapunov} left). Using bisection in $\varepsilon \geq 0$ (for small $\varepsilon$) and solving the corresponding SDPs (\ref{SDP}) for fixed $\varepsilon$ avoided the mentioned numerical issues.
\begin{figure}[!h]
\begin{picture}(0,200)
\put(-20,-0.7){\includegraphics[width=89.5mm]{./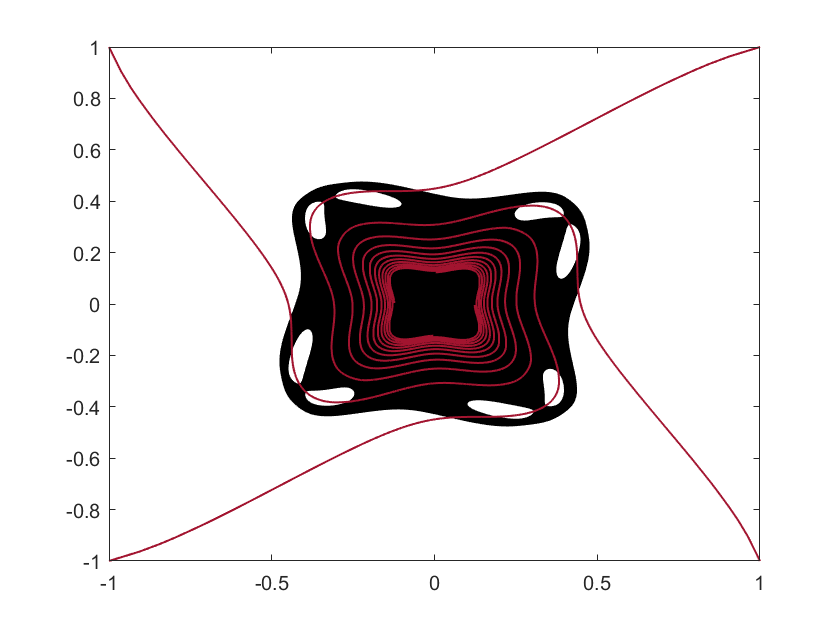}}
\put(220,-1.8){\includegraphics[width=90mm]{./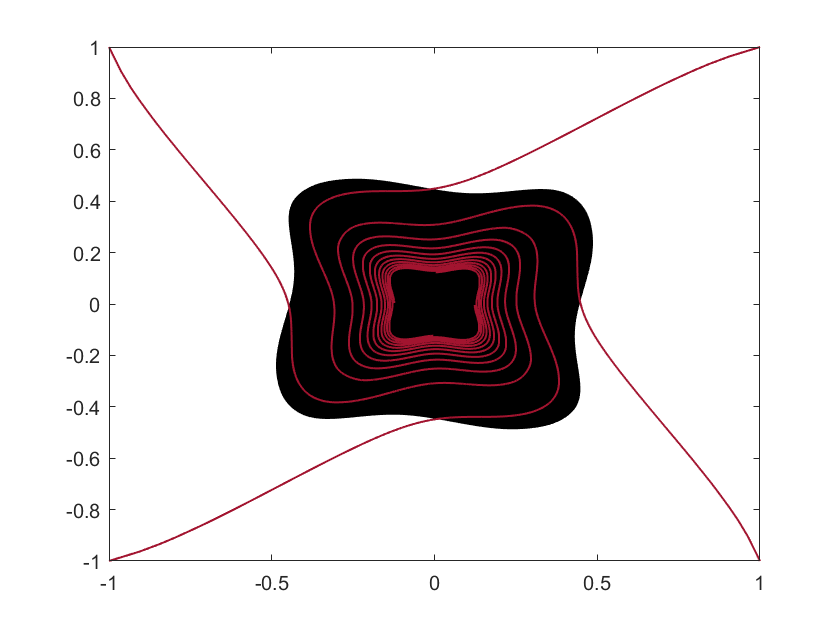}}
\end{picture}
\vspace{-1mm}
\caption{\footnotesize{Outer approximations (black) of the attractor $\A = \{(0,0)\}$ and trajectories starting from $(1,1),(1,-1),(-1,1),(-1,-1)$ (red) for (\ref{eq:NoLyapunov}) for $X =[-1,1]^2$. Left: approximation by degree 16 polynomials and $\beta = 0.2$, the obtained $\varepsilon^*$ in (\ref{SDP}) is too small and causes incorrect behaviour of the set $J^{-1}([0,\varepsilon^*])$, see white ``holes". Right: Outer approximation using bisection on $\varepsilon$ and polynomials up to degree 16 with discounting parameter $\beta = 0.2$.}}\label{figNoLyapunov}
\vspace{-2mm}
\end{figure}


In comparison with \cite{MilanCorbiAttractor} the situation for the H\'{e}non map (Figure \ref{figHenon} left) is similar to the one for the Van-der-Pol oscillator. It takes higher degree polynomials to capture the complex topology of the H\'{e}non attractor compared to \cite{MilanCorbiAttractor}.

\begin{remark}
    The discounting parameter $\beta > 0$ can be tuned and several solutions corresponding to different values of $\beta$ can be intersected to improve the quality of the approximation \cite{MilanCorbiAttractor}. Similarly, we can introduce a parameter $\gamma > 0$ to the ``almost Lyapunov" constraint by considering $\nabla J \cdot f \leq \varepsilon-\gamma \cdot J$. As for $\beta$, small values of $\gamma$ describe less/slower discounting/decay and should be used when the dynamics towards the attractor are slow. The intersection of solutions for different values of $\beta$ and $\gamma$ for the H\'{e}non map is illustrated on the right in Figure \ref{figHenon}.
\end{remark}

\begin{figure}[!h]
\begin{picture}(0,200)
\put(-20,0){\includegraphics[width=93mm]{./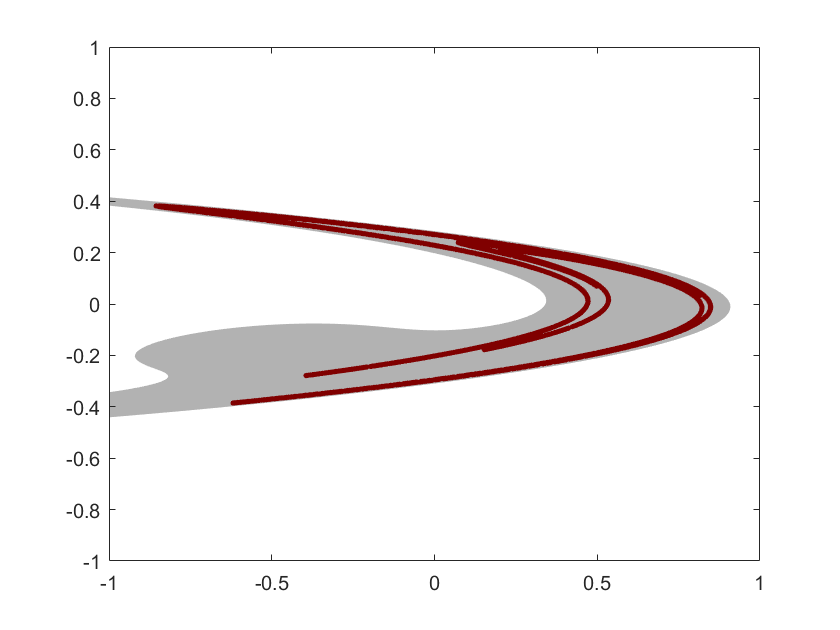}}
\put(220,0){\includegraphics[width=93mm]{./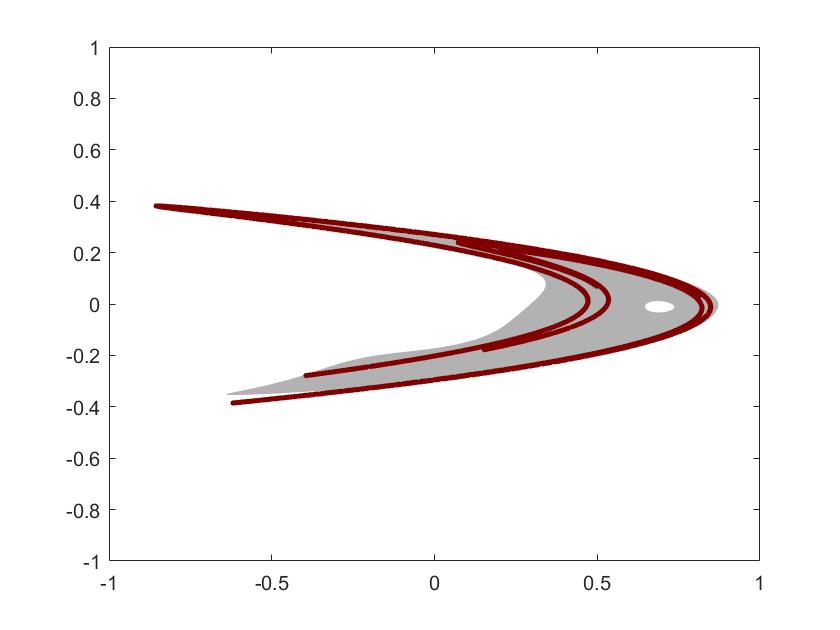}}
\end{picture}
\vspace{-1mm}
\caption{\footnotesize{Outer approximations (gray) of the attractor (red) for the H\'{e}non map for $X = [-1,1]^2$. Left: approximation, degree 6 polynomials and $\alpha = 0.002$, $\gamma = 0.05$. Right: Intersection of approximation by degree 8 polynomials obtained by different values $\beta = 0.001, 0.002, 0.01$ and $\gamma = 0.002, 0.05, 0.2$.}}\label{figHenon}
\end{figure}

\section{Conclusion}
We presented a linear programming approach to outer approximations of global attractors via positively invariant sets. This builds on the recent works in \cite{AlmostLyapunov} and \cite{MilanCorbiAttractor}. We combine both methods by keeping their fundamental advantages. That is: We use the approximation via positively invariant sets from \cite{AlmostLyapunov} by using their method of perturbed Lyapunov equation and we overcome their difficulty in evaluating the cost function by maintaining the linear structure of the optimization problem from \cite{MilanCorbiAttractor}.

This leads to an infinite dimensional linear programming problem characterizing the GA (up to Lebesgue measure discrepancy zero) via certain pre-images of functions that are feasible for the optimization problem. Applying sum-of-squares techniques as in \cite{AlmostLyapunov} and \cite{MilanCorbiAttractor} allows us to formulate a converging hierarchy of semidefinite programs. This gives rise to convergent outer approximations of the GA by positively invariant semialgebraic sets that are easy to compute. We illustrate the approach with numerical examples, including one of a vector field that does not allow for a polynomial Lyapunov function.

With regard to applications, we think that our approach should be understood as a practical extension of \cite{AlmostLyapunov} and as a qualitative extension of the previous work in \cite{MilanCorbiAttractor}, where it seems that the GA is approximated with less Lebesgue measure discrepancy but not necessarily by positively invariant sets.

Apart from approximating the GA, this work can be extended to bounding extreme events on attractors based on the work \cite{goluskin2020attractor}.

Another possible direction could be towards data based algorithms for attractors as in \cite{korda2020computing} or towards attractors of partial differential equations -- where Lyapunov functions also provide a powerful tool -- via for instance \cite{korda2018moments}, \cite{chernyavsky2021convex}.

\section{Acknowledgements}
The author is very thankful to the anonymous reviewers for their insightful comments, remarks, and corrections on previous versions of the paper, including very helpful suggestions in the context of readability.

This work has been supported by the European Union's Horizon 2020 research and innovation programme under the Marie Sk\l{}odowska-Curie Actions, grant agreement 813211 (POEMA).







\bibliographystyle{plain}
\bibliography{references}

 \end{document}